\newtheorem{theorem}{Theorem}[section]
\newtheorem{pr}[theorem]{Proposition}
\newtheorem{cor}[theorem]{Corollary}
\newtheorem{lm}[theorem]{Lemma}
\theoremstyle{definition}
\newtheorem{define}[theorem]{Definition}
\theoremstyle{remark}
\newcommand{\barr}{\overline}
\title[Petri map for vector bundles near good bundles]
{Petri map for vector bundles near good bundles}
\author[A. Castorena]{Abel Castorena}
\address{Centro de Ciencias Matem\'aticas( Universidad Nacional Auton\'oma de M\'exico, Campus Morelia); Apartado Postal 61-3 (Xangari), 58089 Morelia, Michoac\'an}
\email{abel@matmor.unam.mx}
\author[A. L\'opez]{Alberto L\'opez Mart\'in}
\author[M. Teixidor]{Montserrat Teixidor i Bigas} \address{Department of Mathematics, Tufts University, Bromfield-Pearson Hall, 503 Boston Avenue, Medford, MA 02155}
\email{alberto.lopez@tufts.edu}
\email{montserrat.teixidorbigas@tufts.edu}
\subjclass[2010]{Primary 14H51 $\cdot$ Secondary 14H60}
\thanks{The first author was supported by grants IN100211-2 (PAPIIT-UNAM) and 166158 (CONACyT)}
\date{\today}
\begin{document}
\begin{abstract} Let $C$ be a projective and nonsingular curve of genus $g$. Denote by $\omega$ the canonical line bundle on $C$. Consider the locus $\mathcal B^k_{r,d}$ of stable vector bundles of rank $r$ and degree $d$ with at least $k$ independent sections on $C$. In this paper we show that when $C$ is generic and under some conditions on the degree and genus, there exists a component $B$ of $\mathcal B^k_{r,d}$ of the expected dimension, such that for a generic vector bundle $E$ of $B$, the Petri map $H^0(C,E)\otimes H^0(C,\omega \otimes E^*)\rightarrow H^0(C,\omega \otimes E\otimes E^*)$ is injective.
 
 \end{abstract}
\maketitle
\section{Introduction}

\bibliographystyle{alpha}
Let $C$ be a projective nonsingular genus $g$ curve defined over an algebraically closed field. Let $\mbox{Pic}^d(C)$ be the Picard variety that parametrizes isomorphism classes of degree $d$ line bundles on $C$. Let $W_d^r$ be the subvariety of $\mbox{Pic}^d(C)$ of all line bundles of degree $d$ with at least $r+1$ sections. This set, known as the Brill-Noether locus, has a natural scheme structure of determinantal variety. For $L\in\mbox{Pic}^d(C)$, the Petri map is defined as the natural cup-product \begin{align}H^0(C,L)\otimes H^0(C,\omega\otimes L^{-1})\longrightarrow H^0(C,\omega),\end{align}where $\omega$ denotes the canonical line bundle on $C$. The vector space $H^0(C,\omega)$ can be identified with the dual of  the tangent space to $\mbox{Pic}^d(C)$. Suppose that $h^0(C,L)\geq r+1$. The tangent space to $W^r_d$ at the point $L$ is the orthogonal to the image of the Petri map. The injectivity of the Petri map on a generic curve for every line bundle was conjectured by Petri in \cite{Pet25} and first proved by Gieseker\ \cite{Gie82}.  The Gieseker-Petri Theorem states that $W^r_d$ is empty when its expected dimension, given by the Brill-Noether number $\rho(g,d,r)= g-(r+1)(g-d+r)$, is negative and of the expected dimension when nonempty. 

The generalization of classical Brill-Noether theory to higher rank considers the loci of rank $r$ degree $d$ stable vector bundles $E$ that have at least a given number $k$ of sections. These loci have also a natural scheme structure as locally defined determinantal varieties and therefore have an expected dimension and singular locus. 
Most of the properties of classical Brill-Noether theory no longer hold, that is, the Brill-Noehter
 loci on the generic  curve may be empty  when the expected dimension is positive and nonempty when it is negative, 
 they may be reducible and 
the singular locus may be larger than expected.

 The equivalent of the Petri map in the case of higher rank bundles is defined by the natural cup-product
\begin{align}P_E: H^0(C,E)\otimes H^0(C,\omega \otimes E^*)\longrightarrow H^0(C,
\omega \otimes E\otimes E^*).
\end{align}
The orthogonal to the image of this map is the tangent space at $E$ to the locus  $\mathcal B^k_{r,d}$ 
of vector bundles of rank $r$ degree $d$ with 
$k=h^0(E)$ sections. If the Petri map is injective, then the Brill-Noether locus for $k$ is nonsingular and of the 
expected dimension at $E$. It is not true, however, that  the Petri map is injective for every stable vector bundle
 on a generic curve. In fact, for certain values of the rank, degree and genus, one can find vector bundles
  on each curve for which the map is not injective (see \cite{Tei91a}).

Despite all the abnormalities cited above, it was proved in \cite{Tei05,Tei08b} that for a large range 
of values of $r, k, d$ and $g$, and for a generic curve, there exist components of the expected dimension 
in the Brill-Noether locus  $\mathcal B^k_{r,d}$ of stable vector bundles. 
Existence of a component of the right dimension has also been proved by other methods under additional conditions (see, for example, \cite[$\S$4--6]{Iwona} for an overview).

 For the case of rank two, Ciliberto and Flamini \cite{CilFlam} show again  the existence of these components
   using the tools of ruled surfaces. Moreover, they show that for the generic point of the components 
   they construct, the Petri map is injective. For vector bundles with number of sections at most equal to the rank or of slope at most two, the components are also
    shown to be generically nonsingular in \cite{Coh, Coh2}.
   
    The goal of this paper is to show that injectivity 
   of the Petri map for the generic vector bundle of one component holds 
   in much greater generality. In particular, we will see that it holds  in the range for which
     existence was proved in \cite{Tei05,Tei08b}.

We have the following

\begin{theorem} Let $C$ be a generic nonsingular curve of genus $g\ge 2$.
 Let $d,r,k$ be positive integers with  $ k>r$. Write
$$d=rd_1+d_2,\ k=rk_1+k_2, \ d_2<r,\ k_2<r$$ and all $d_i,\ k_i$
nonnegative integers. Assume that

\vspace{3mm}
\begin{center}
\begin{tabular}{r l}
\textnormal{(*)}& $g-(k_1+1)(g-d_1+k_1-1)\ge 1 ,\ 0\not= d_2\ge k_2;$\\
\textnormal{(**)}& $g-k_1(g-d_1+k_1-1)> 1 ,\ d_2= k_2=0;$\\
\textnormal{(***)}& $g-(k_1+1)(g-d_1+k_1)\ge 1 ,\ d_2< k_2.$
\end{tabular}
\end{center}
\vspace{2mm}
Then, the moduli space of
coherent systems of rank $r$, degree $d$, and with $k$ sections on
$C$, is nonempty and has one component of the expected dimension
which is smooth at the generic point, that is, a component for which 
 the Petri map $P_E$ is injective at the generic point. \end{theorem}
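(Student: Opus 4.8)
The plan is to prove injectivity of $P_E$ by a degeneration-and-semicontinuity argument. By the deformation theory of coherent systems the kernel of $P_E$ measures the failure of the moduli space to be smooth of the expected dimension at a pair $(E,V)$: every component of $\mathcal B^k_{r,d}$ has dimension at least the expected one, and $P_E$ being injective at a point forces the tangent space there to have exactly the expected dimension, so that point is a smooth point lying on a component of the expected dimension. Since the dimension of $\ker P_E$ is upper semicontinuous in families, it suffices to exhibit a single coherent system, on a suitable degeneration $C_0$ of $C$, with the prescribed rank, degree and number of sections, for which $P_E$ is injective and which smooths to the generic curve $C$. Existence of such a smoothing in the range (*)--(***) is exactly what was proved in \cite{Tei05,Tei08b}, so the genuinely new point is injectivity of the Petri map on the degenerate object.

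First I would specialize $C$ to the reducible nodal curve used in \cite{Tei05,Tei08b} and realize the limit coherent system so that its restriction to each component is a direct sum of line bundles, glued at the nodes. The arithmetic $d=rd_1+d_2$, $k=rk_1+k_2$ prescribes the degrees and section counts of the summands: exactly $d_2$ of the $r$ line-bundle factors carry one extra unit of degree, and the $k_2$ extra sections are apportioned among the factors according to which case we are in (whether $d_2\ge k_2$ with $d_2\ne 0$, or $d_2=k_2=0$, or $d_2<k_2$). On each component the target $H^0(C,\omega\otimes E\otimes E^*)$ splits along the factors as $\bigoplus_{i,j}H^0(C,\omega\otimes L_i\otimes L_j^{-1})$ and the cup-product respects this splitting, so the analysis reduces to the diagonal blocks, which are the classical Petri maps of the $L_i$ and are injective by the Gieseker-Petri theorem \cite{Gie82}, and the off-diagonal blocks, which are mixed multiplication maps. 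Injectivity of the latter I would obtain by taking the $L_i$ sufficiently general and applying a general-position / base-point-free-pencil-trick argument, the numerical hypotheses guaranteeing that line bundles with the required numbers of sections exist on the generic components.

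The main obstacle is to upgrade this component-by-component block injectivity to injectivity of the global Petri map on $C_0$ and then to control its behaviour under smoothing. Globally the gluing at the nodes couples the summands, so $H^0$ on $C_0$ is not the naive direct sum of the component contributions but the subspace of tuples agreeing at the nodes; computing $P_E$ on $C_0$ through the normalization sequence therefore requires checking that the node-compatibility conditions create no kernel beyond what the Gieseker-Petri input already controls. This is precisely where the trichotomy (*)/(**)/(***) does real work, since the way the extra degree $d_2$ and the extra sections $k_2$ are distributed at the nodes differs in the three cases. Once injectivity on $C_0$ is established, upper semicontinuity of $\dim\ker P_E$ along the smoothing family propagates it to the generic smooth curve, and injectivity there forces the corresponding component to be smooth of the expected dimension at its generic point, which is the assertion of the theorem.
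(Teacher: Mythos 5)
Your proposal has a genuine gap at its central step. The strategy ``exhibit a limit coherent system on $C_0$ with injective Petri map and conclude by upper semicontinuity of $\dim\ker P_E$'' is exactly the naive degeneration argument that fails for Gieseker--Petri type statements, and it is the reason Eisenbud--Harris had to invent the order-of-vanishing bookkeeping that this paper adapts. The degeneration used in \cite{Tei05,Tei08b} is a chain of \emph{elliptic} curves. On such a component, a line bundle summand of degree $d_1$ already has $d_1$ sections (far more than $k_1+1$), so $h^0$ jumps on the special fiber and $\pi_*\mathcal E$ is not compatible with base change; worse, $\omega$ restricted to an elliptic component is trivial, so the off-diagonal and diagonal blocks $H^0(L_i)\otimes H^0(\omega\otimes L_j^{-1})$ you propose to control are essentially vacuous there (the second factor has no sections), and Gieseker--Petri is not available on genus-one components in any case. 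There is also no single limit bundle $E_0$: one must twist by components of the central fiber, obtaining a different $\mathcal E_i$ for each $i$, and the $k$-dimensional space of limiting sections is cut out by vanishing conditions at the nodes rather than being a naive $H^0$ on $C_0$. So ``the Petri map on $C_0$'' is not an object whose injectivity implies injectivity on the generic fiber, and the normalization-sequence/node-compatibility step you flag as the main obstacle is not merely unproved --- it is not the right object to analyze.

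What the paper does instead is the contrapositive with vanishing-order accounting: assume a nonzero kernel element $\rho_\eta$ exists over the generic point, produce for each component $C_i$ a representative $\rho_i=t^{b_i}\rho\in\pi_*(\mathcal E_i\otimes\omega\otimes\mathcal E_i^*)$ not divisible by $t$, and track $\mathrm{ord}_{P_i}(\rho_i)$ along the chain. Propositions \ref{vansigma}, \ref{E1} and \ref{+Ld} show that this order increases by at least $2$ across each elliptic component (because a kernel element must involve at least two independent $\sigma$'s and two independent $\tau$'s in each line-bundle block, and by Corollary \ref{Remark} at most one of each can achieve the extremal vanishing at the pair of nodes), and Corollary \ref{racional} shows it does not decrease across rational components. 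Summing over the $g$ elliptic components forces a section of $\omega$, of degree $2g-2$, to vanish to order at least $2g$ --- a contradiction. Your block decomposition of $E_i$ into line bundles does appear in the paper's proof, but only as a tool inside this vanishing-order argument, not as a way to prove injectivity of a Petri map on the special fiber. The reduction of the Theorem to the existence results of \cite{Tei05,Tei08b}, and the remark that injectivity of $P_E$ makes the point smooth on a component of expected dimension, are correct and agree with the paper.
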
 
 
 For rank two, if both the number of sections of the vector bundle and the number of sections of 
 its Serre dual are at least two, one can get a slightly better result (see \cite{Tei08b}).
 
 \begin{theorem} Let $C$ be a generic nonsingular curve of genus $g\ge 2$.
 Let $d,k$ be positive integers with  $ k\ge 2$ and $k-d+2(g-1)\ge 2$. 
 Assume that the expected dimension of the locus of vector bundles with these sections is 
at least 1 for $d$ odd and at least 5 for $d$ even. 
  Then, the locus of rank two vector bundles of degree $d$ with $k$ sections
   on $C$ is nonempty and has one component of the expected dimension
which is smooth at the generic point, that is, a component for which 
 the Petri map $P_E$ is injective at the generic point. \end{theorem}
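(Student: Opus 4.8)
The plan is to prove all three assertions — nonemptiness, existence of a component of the expected dimension, and Petri injectivity at its generic point — by degeneration, as one must for Theorem 1.1, but exploiting the extra symmetry special to rank two. The hypothesis $k-d+2(g-1)\ge 2$ says exactly that $h^1(E)=h^0(\omega\otimes E^*)\ge 2$, so both $E$ and its Serre dual carry at least two sections; this self-duality is what will buy the improved numerical range. First I would specialize the generic curve $C$ to a reducible nodal curve $C_0$ — for instance a chain of elliptic (or rational) components glued at nodes — and pass to the relative moduli space of coherent systems of rank $2$, degree $d$ and $k$ sections over a one-parameter family with special fiber $C_0$. Since this locus is closed in the family, its fiber dimension is bounded below by the determinantal expected dimension, and the corank of the Petri map is upper semicontinuous, it suffices to exhibit on $C_0$ a single limit coherent system lying on a component of the expected dimension at which the limit Petri map is injective; the generic fiber then inherits both properties.

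For the construction on $C_0$ I would assemble the limit object as a rank-two bundle built from line bundles on the components with prescribed vanishing at the nodes, distributing the degree and the $k$ sections so that the aspect on each component is general in its classical Brill-Noether locus. The parity hypothesis — expected dimension $\ge 1$ for $d$ odd and $\ge 5$ for $d$ even — is what guarantees that a balanced, stable distribution exists: in the even case a sub-line-bundle of degree exactly $d/2$, or the symmetric splitting $L\oplus L$, can destabilize the bundle or collapse the two halves of the section space, and the larger threshold provides the room needed to avoid these configurations. Because $H^0(E)$ and $H^0(\omega\otimes E^*)$ each have dimension at least two, I can arrange the sections of $E$ and those of its Serre dual independently on the components, which is precisely the feature unavailable in the general-rank argument of Theorem 1.1 and that yields the sharper range here. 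Nonemptiness and the dimension count for the resulting component are what \cite{Tei08b} supplies, so the substantive new point is Petri injectivity.

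The heart of the argument is therefore the injectivity of the limit Petri map
\[
P_E: H^0(C_0,E)\otimes H^0(C_0,\omega\otimes E^*)\longrightarrow H^0(C_0,\omega\otimes E\otimes E^*).
\]
On the reducible curve this map decomposes according to the components and to how sections glue across the nodes, so a kernel element is recorded by its restrictions to the components subject to node-compatibility conditions. I would show that such a kernel element restricts on some component to a kernel element of a classical Petri map for a general line bundle, and then invoke the classical Gieseker-Petri theorem to force it to vanish; the diagonal terms of $P_E$ are controlled directly in this way. The off-diagonal cross terms, of the shape $H^0(L_i)\otimes H^0(\omega\otimes L_j^{-1})\to H^0(\omega\otimes L_iL_j^{-1})$ between the two line-bundle summands, do not fall under classical Gieseker-Petri and must be treated by a general-position choice of the gluing data at the nodes, forcing their images to be independent.

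I expect this last point to be the main obstacle. Whereas the diagonal contributions are governed by the classical theorem, the cross terms depend delicately on the vanishing orders of the aspects at the nodes and on the chosen gluing, and it is here that the genericity of $C$ — through a generic choice of $C_0$ and of the limit data — together with the dimension thresholds must be used most carefully, the even-degree case being the most demanding. Once injectivity of $P_E$ is established on $C_0$, upper semicontinuity of the corank propagates it to the generic smooth fiber, and the equality of actual and expected dimension at $E$ then gives that the component is smooth at its generic point.
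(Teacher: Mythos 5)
Your overall skeleton --- degenerate to a chain of elliptic curves, quote \cite{Tei08b} for nonemptiness and the dimension of a component, and make Petri injectivity the genuinely new content --- matches the paper. But two steps of your plan do not survive closer inspection. First, the logical engine: you propose to exhibit a limit coherent system on $C_0$ at which ``the limit Petri map is injective'' and then propagate injectivity to the generic fiber by upper semicontinuity of the corank. That is not how the paper argues, and for good reason: for a limit linear series there is no single Petri map on the special fiber whose corank controls the generic one --- the data consist of a different bundle $\mathcal E_i$ (with its own degree distribution) on each component, $h^0$ jumps on the reducible fiber, and the componentwise cup products typically do have kernels. The paper instead runs the Eisenbud--Harris scheme in reverse: assume a kernel element $\rho_\eta$ exists over the generic point, extend it to elements $\rho_i$ of $\pi_*({\mathcal E}_i)\otimes\pi_*(\omega\otimes{\mathcal E}_i^*)$ not divisible by $t$, and track the order of vanishing of $\rho_i$ at the nodes. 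Propositions \ref{E1} and \ref{+Ld} show this order increases by at least $2$ across each elliptic component, so on the last component it would be at least $2g$, exceeding $\deg\omega=2g-2$, a contradiction. Nothing is ever proved injective on $C_0$ itself.

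Second, the point you flag as ``the main obstacle'' --- the cross terms --- is exactly where your proposal stops short, and the classical Gieseker--Petri theorem cannot fill the hole: the components of $C_0$ are elliptic or rational, so there are no ``diagonal terms governed by the classical theorem'' to speak of. The paper's actual mechanism is elementary but specific: writing the restriction to an elliptic component as a direct sum of line bundles, any kernel element must have, in each coordinate of the induced splitting of the cup product, at least two terms with linearly independent $\sigma$'s and linearly independent $\tau$'s; since for generic nodes $P_i,Q_i$ a line bundle on an elliptic curve has at most one section (up to scalar) whose vanishing orders at $P_i,Q_i$ add up to its degree (Cor.~\ref{Remark}), at most one of the two $\sigma$'s and one of the two $\tau$'s can achieve equality in (\ref{ineq}), which forces the order of $\rho$ to climb by two; the delicate subcase where all the line bundles are equal and special is handled by observing that $\sigma^j_0\tau^l_0$ would be a section of $\omega|_{C_i}$ vanishing to total order $2g-2$ at the nodes, hence to order exactly $2n(i)-2$ at $P_i$, which is incompatible with $\rho$ lying in the kernel. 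You would need to supply an argument of this kind --- not merely assert a general-position choice of gluings --- before the proof is complete.
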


 The two theorems above  will be a  consequence of the more general result \ref{Petri}
  and the constructions in \cite{Tei05, Tei08b}. In these two papers, the construction of a component of
  $\mathcal B^k_{r,d}$ is obtained by  degenerating the generic curve $C$ to a reducible curve $C_0$ 
   and constructing a limit stable vector bundle with limit sections
  on the reducible curve.
   It turns out that this limit linear series satisfies good properties, 
   in particular its restriction to every component is semistable of a fairly general type 
   so that the result below is applicable.

\begin{pr}\label{Petri} Assume that we have a family of curves for which the generic fiber is 
 a generic curve $C$  of genus $g$ and the special fiber $C_0$ is a chain of elliptic curves.
  Consider the locus $\mathcal B^k_{r,d}$ consisting of stable limit linear series  of rank $r$ and degree $d$
   with at least $k$ independent sections on the family. Suppose that there exists a component $B$ of
    $\mathcal B^k_{r,d}$ 
 such that for the generic point $E\in B$, the restriction of the vector bundle to each elliptic component of
 $C_0$  is a direct sum of generic indecomposable vector bundles all of the same slope or a direct sum of 
 line bundles that are generics or all are equal, then the Petri map 
\begin{align}P_E: H^0(C,E)\otimes H^0(C,\omega \otimes E^*)\rightarrow H^0(C,\omega \otimes E\otimes E^*)\end{align}
 is injective for the generic vector bundle in $B$ over $C$.\end{pr}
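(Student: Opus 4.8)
The plan is to deduce the injectivity of $P_E$ from a degeneration argument, using that the dimension of $\ker P_E$ is upper semicontinuous as $E$ varies in the family. Consequently, to prove that $P_E$ is injective at the generic point of $B$ it suffices to show that the corresponding limit Petri map over the special fiber $C_0$ is injective: the kernel of $P_E$ on the generic fiber specializes into the kernel of this limit map. First I would therefore reinterpret the three spaces and the cup product $P_E$ in the language of limit linear series on the chain of elliptic curves $\Gamma_1,\dots,\Gamma_g$. The sections in $H^0(C,E)$ and in $H^0(C,\omega\otimes E^*)$ degenerate to collections of aspects on the components $\Gamma_j$, each aspect carrying a prescribed vanishing order at the node(s) adjacent to its component, and $P_E$ degenerates to a map that can be analyzed one component at a time. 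The goal becomes to show that the kernel of this limit map vanishes.

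Next I would localize to a single elliptic component. Writing an element of the kernel as $\sum_\alpha s_\alpha\otimes t_\alpha$ with $s_\alpha\in H^0(C,E)$, $t_\alpha\in H^0(C,\omega\otimes E^*)$ and $\sum_\alpha s_\alpha t_\alpha=0$ in $H^0(C,\omega\otimes E\otimes E^*)$, the restriction of the aspects to a component $\Gamma=\Gamma_j$ yields sections of $E|_{\Gamma}$ and of $(\omega\otimes E^*)|_{\Gamma}$ lying in the twisted subsheaves cut out by their vanishing orders at the two nodes of $\Gamma$, and the relation $\sum_\alpha s_\alpha t_\alpha=0$ restricts to $\Gamma$. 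The structural input that makes an induction along the chain possible is the compatibility between the vanishing sequences of the two dual limit series at each node, which forces the orders along the two branches to match up in a controlled way, so that an aspect vanishing to high order along one branch is paired with one of low order along the other. This is what allows information to be propagated from one component to the next.

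The heart of the argument is the base case on a single $\Gamma$, and this is precisely where the hypothesis on $E|_{\Gamma}$ enters. By Atiyah's classification, a generic indecomposable bundle of fixed rank and slope on an elliptic curve has the simplest possible cohomology and the multiplication maps $H^0(\Gamma,F)\otimes H^0(\Gamma,G)\to H^0(\Gamma,F\otimes G)$ attached to it have maximal rank. When $E|_{\Gamma}$ is a direct sum of such indecomposables of one common slope, the Petri-type map with the prescribed vanishing conditions decomposes into blocks according to the summands, and genericity forces each relevant twisted multiplication to be injective; in the line-bundle case the hypothesis that the summands are generic or all equal plays the same role, the coincident case collapsing a block to a single classical line-bundle Petri map on the elliptic curve. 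Feeding these component-wise injectivity statements back along the chain through the vanishing-order compatibility, I would conclude by induction that every contribution $s_\alpha\otimes t_\alpha$ must vanish; hence the limit Petri map is injective, and by the semicontinuity above $P_E$ is injective for the generic $E\in B$.

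The step I expect to be the main obstacle is not the elliptic computation in isolation but the bookkeeping that glues the components together: one must verify that the vanishing sequences of the aspects of $E$ and of $\omega\otimes E^*$ at each node fit together in precisely the way required, so that local injectivity on each $\Gamma_j$ genuinely assembles into global injectivity and leaves no room for cancellations spread across several components. Keeping track of these vanishing orders, and checking that the hypothesis on the restricted bundle survives the twists by the node divisors that the limit structure imposes, is the delicate point; the genericity of $C$ together with the good properties of the limit series built in \cite{Tei05,Tei08b} are what should make this control possible.
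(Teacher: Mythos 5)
Your overall framing (extend a putative kernel element of $P_E$ from the generic fiber to the chain and analyze it component by component) agrees with the paper's, but the mechanism you propose for the key step is not correct. You assert that on each elliptic component the block maps $H^0(L^j)\otimes H^0(\omega\otimes (L^l)^*)\to H^0(L^j\otimes\omega\otimes (L^l)^*)$ are injective by genericity, and that global injectivity of the limit Petri map is then assembled from these local injectivity statements. A dimension count rules this out: on a component $C_i$ the restriction of $\omega$ has degree $2g-2$, so a typical block has source of dimension about $d_1(2g-2-d_1)$ and target of dimension $2g-2$; these maps have large kernels no matter how generic the summands are, and the same remains true after imposing the vanishing conditions coming from the aspects. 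So there is no component-wise injectivity to propagate, and the induction you sketch has no working base case. Indeed the paper never proves (and could not prove) that the restricted Petri maps are injective.

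What the paper actually does is the Eisenbud--Harris accumulation-of-vanishing argument. A nonzero kernel element $\rho_\eta$ is normalized on each component to $\rho_i\notin t\,\pi_*(\mathcal E_i\otimes\omega\otimes\mathcal E_i^*)$, and one tracks $\mathrm{ord}_{P_i}(\rho_i)$ along the chain. Since the cup product of a single decomposable tensor $\sigma^j\otimes\tau^l$ is never zero, each block of $\rho_i$ in the direct-sum decomposition must contain at least two terms with linearly independent $\sigma$'s and independent $\tau$'s; by Proposition \ref{vansigma} and Corollary \ref{Remark}, among independent sections at most one (or at most $d'_2$, or at most the number of special summands) can achieve equality in (\ref{ineq}), so at least one $\sigma$-factor and one $\tau$-factor must have strictly smaller vanishing, which forces $\mathrm{ord}_{P_{i+1}}(\rho_{i+1})\ge \mathrm{ord}_{P_i}(\rho_i)+2$ across each elliptic component (Propositions \ref{E1} and \ref{+Ld}; the case where all line bundles are equal and special requires the extra observation that $\sigma^j_0\tau^l_0$ is a section of $\omega_i$ vanishing to total order exactly $2g-2$ at the nodes, hence to order exactly $2n(i)-2$ at $P_i$). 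Summing over the $g$ elliptic components yields a vanishing of order at least $2g>2g-2=\deg\omega$, a contradiction. This quantitative growth estimate, not local injectivity, is the missing idea in your proposal; the ``bookkeeping'' you defer to cannot be carried out without it.
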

 
 The proof of this result uses the theory of limit linear series for vector bundles that we review 
 in the next Section. Then in Section 3, we do an analysis of the  vanishing  at the nodes of a section
  in the kernel, similar to what was done in \cite{EH83, TeiPet} to finish the proof of Prop. \ref{Petri}.
   The arguments, while parallel to those in \cite{TeiPet} are somewhat simpler. In our case, we do not need to 
   investigate all possible cases of limit vector bundles, it suffices to consider those used in 
   the constructions in \cite{Tei05, Tei08b}.
   
   The methods of this paper could potentially be used to bound the dimension of the singular locus of the
    Brill-Noether loci and therefore to bound the dimension of the Brill-Noether loci themselves.

\section{Limit Linear Series}\label{limlinser}

Assume that we have a family of curves ${\mathcal C}\rightarrow S$
where $S$ is the spectrum of a discrete valuation ring. Denote by
$\eta$ the generic point in $S$, by $t$ a generator of the maximal
ideal of the ring and by $\nu$ the discrete valuation.

We assume that ${\mathcal C}_{\eta}$ is nonsingular while the central
fiber is a generic chain of elliptic and rational curves. By this we mean that the 
central fiber is a reducible curve with $M$ irreducible components  
 $C_1,\dots,C_M$ that are either rational or elliptic and so that the curve $C_i$
intersects only $C_{i-1}, C_{i+1}$ at generic points $P_i, Q_i\in C_i$.

 We want to consider linear series for vector bundles of
rank $r$ on our family. On the central fiber, one obtains then a limit linear
series in the sense of \cite{Tei91a, Clay} that generalizes the 
concept of limit linear series of Eisenbud and Harris. As in the case of rank one,
one needs to give not just one space of sections but several. However, unlike in the case
of rank one, the way the restrictions of the vector bundle to each component
glue at the nodes is relevant and so is the way the various sections glue at the nodes.

When one has a line bundle on the family, tensoring with line bundles supported 
on the reducible fiber alters the degrees on each component in an arbitrary way. 
This is no longer true for vector bundles, therefore the degrees of the vector bundles
 on the different components will not be assumed to be all the same.

\begin{define}\label{lls}A limit linear series is defined by the following pieces of data:
\begin{enumerate}
\item[I)]Vector bundles $E_i$ of rank $r$ and degree $d_i$ on each component $C_i$ and $k$-dimensional spaces  $V_i$ of sections of
$E_i$ so that  $\sum _{i=1}^M d_i-r(M-1)a=d$ for a fixed integer $a$,  and the sections of the vector bundles $E_i(-aP_i), E_i(-aQ_i)$ are completely determined by their value at the nodes.

\item[II)]For every node obtained by gluing $Q_i$ and $P_{i+1}$, an isomorphism of the projectivization of the fibers $(E_i)_{Q_i}$ and $(E_{i+1})_{P_{i+1}}$.

\item[III)]Bases of the spaces of sections depending on both the component and the node on the component $s^t_{Q_i}, s^t_{P_{i+1}}$, $t=1,\dots,k$ of the vector spaces $V_i$ and
$V_{i+1}$ so that corresponding sections glue by the isomorphisms above
and $ord_{P_{i+1}}s_{i+1}^t+ord_{Q_{i}}s_{i}^t\ge
a$.
\end{enumerate}
\end{define}

For the reader familiar with limit linear series for line bundles: in that case all $d_i$ are the same and equal to the initial degree $d$, and $a=d$. As the fibers are then one-dimensional, the gluing at the nodes is irrelevant.

The orders of vanishing at a point $P_i$ (resp. $Q_i$) of a linear series will be denoted by $(a_j(P_i))$ (resp. $( a_j(Q_i)))$, $j=1, \dots,k$.  Note that each $a_j$ will appear at most $r$ times. 
If a vanishing appears with order $j$, it means that there is a $j$-dimensional space of sections with this vanishing at $P_i$ (resp. $Q_i$).

\section{Proof of the main result}

We will be using the notation of the previous section. Let us assume that ${\mathcal C}\rightarrow S$ is a family of curves
parametrized by  the spectrum of a discrete valuation ring, where
 $t$ denotes the parameter
and $\nu$ the discrete valuation.
We assume that the central
fiber is a chain of curves as described above.
The objective is to prove the result for the geometric generic fiber, that is, for $\mbox{Spec } \barr K(\eta)$, 
where $K(\eta)$ denotes the field of fractions of the discrete valuation ring and 
$\barr K(\eta)$ is its algebraic closure.

Choose a component $C_i$ of the central fiber and denote by ${\mathcal E}_i$
 the vector bundle on $\pi :{\mathcal
C}\rightarrow S$ whose restriction to all components of the central
fiber, except $C_i$, has trivial sections. With the notation of the previous section, this is 
 the vector bundle whose restriction to $C_i$ is $E_i$.
 
 The proof of \ref{Petri} will follow from an analysis of the vanishing at the nodes 
 of a potential section in the kernel of the Petri map. This follows the pattern of 
 the proof in \cite{EH83, TeiPet}.

\begin{lm} For every component $C_i$, there is a basis $\sigma_j,
j=1,\dots,k$ of $\pi_*{\mathcal E}_i$ such that

a) $ord_{P_i}(\sigma_j)=a_j(P_i)$;

b) for suitable integers $\alpha_j$, $t^{\alpha_j}\sigma _j$ are a
basis of $\pi _*({\mathcal E}_{i+1})$.
\end{lm}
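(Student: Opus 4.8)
The plan is to treat the two pushforwards as $R$-lattices in a single $K$-vector space and to reduce the statement to the elementary divisor theorem over the discrete valuation ring $R$, refined so as to respect the vanishing filtration at $P_i$. First I would record that, since $R$ is a DVR, $\pi_*\mathcal{E}_i$ and $\pi_*\mathcal{E}_{i+1}$ are torsion-free, hence free, of rank $k$; because $\mathcal{E}_i$ and $\mathcal{E}_{i+1}$ agree on the generic fiber (they differ only by a twist supported on the central fiber), both are lattices in the common $k$-dimensional vector space $V=H^0(\mathcal{C}_\eta,\mathcal{E}_\eta)$ over the fraction field $K$ of $R$. I would also fix the identification $\pi_*\mathcal{E}_i\otimes_R k(0)\cong V_i$, so that reduction modulo $t$ of a section of $\pi_*\mathcal{E}_i$ recovers its restriction to $C_i$ and the order $ord_{P_i}$ is read off there.

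Next, part (b) is exactly the assertion that the two lattices can be simultaneously diagonalized: by the elementary divisor theorem there is a basis $\{e_j\}$ of $\pi_*\mathcal{E}_i$ and integers $\alpha_j$ with $\{t^{\alpha_j}e_j\}$ a basis of $\pi_*\mathcal{E}_{i+1}$. The key geometric observation I would make is that the exponents $\alpha_j$, and the filtration of $\pi_*\mathcal{E}_i$ that the splitting $\bigoplus_j R e_j$ defines, are governed entirely by the node $Q_i=P_{i+1}$: passing from $\mathcal{E}_i$ (sections concentrated on $C_i$) to $\mathcal{E}_{i+1}$ (sections concentrated on $C_{i+1}$) multiplies a section by a power of $t$ determined by its order of vanishing at $Q_i$, in accordance with the gluing inequality of Definition \ref{lls}(III). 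Thus the elementary-divisor filtration reduces modulo $t$ to the vanishing filtration at $Q_i$ on $V_i$.

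Then, to obtain (a) simultaneously, I would use that $P_i$ and $Q_i$ are distinct points of $C_i$, so the vanishing filtration at $P_i$ and the vanishing filtration at $Q_i$ are two flags on $V_i$, and two flags always admit a common adapted basis, each step of each flag being spanned by a subset of the basis vectors. Choosing such a basis of $V_i$ and lifting it through $\pi_*\mathcal{E}_i\twoheadrightarrow V_i$ while keeping each lift inside a single graded piece of the elementary-divisor decomposition — which is possible precisely because the chosen vectors are adapted to the $Q_i$-flag — produces the desired $\sigma_j$: adaptedness to the $P_i$-flag gives $ord_{P_i}(\sigma_j)=a_j(P_i)$, that is (a), while membership in a single graded piece gives $t^{\alpha_j}\sigma_j$ a basis of $\pi_*\mathcal{E}_{i+1}$, that is (b).

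The step I expect to be the main obstacle is the compatibility just invoked: making precise that the scaling exponents really are controlled by $Q_i$-vanishing (so that the elementary-divisor filtration coincides, after reduction modulo $t$, with the $Q_i$-flag), and that the common adapted basis for the two flags on $V_i$ can be lifted to $\pi_*\mathcal{E}_i$ without disturbing either the $P_i$-orders or the exponents $\alpha_j$. This requires spelling out the explicit twist relating $\mathcal{E}_i$ and $\mathcal{E}_{i+1}$ and a small amount of care with the unipotent freedom in the choice of adapted basis; everything else is formal linear algebra over $R$.
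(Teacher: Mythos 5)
The paper does not prove this lemma at all: it outsources it to \cite[Lemma 1.2]{EH83}, whose proof is exactly your opening move --- realize $\pi_*{\mathcal E}_i$ and $\pi_*{\mathcal E}_{i+1}$ as two free rank-$k$ lattices in a common $K$-vector space and simultaneously diagonalize them by the elementary divisor theorem to get (b). So your setup is the standard one. Where you diverge is in how (a) is then arranged. Eisenbud--Harris take, among all bases satisfying (b), one maximizing $\sum_j ord_{P_i}(\sigma_j)$, and show it realizes the vanishing sequence by an exchange argument: the legal modifications preserving (b) are $\sigma_j\mapsto \sigma_j+\sum_i c_{ji}\sigma_i$ with $\nu(c_{ji})\ge \alpha_i-\alpha_j$, so whenever two sections have the same order at $P_i$ one may add (a unit multiple of) the one with smaller $\alpha$ to the one with larger $\alpha$ and strictly increase the sum. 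Your alternative --- a common adapted basis for two flags on $V_i$, lifted to $\pi_*{\mathcal E}_i$ --- also works and is arguably cleaner, \emph{provided} the second flag is the one actually induced by the elementary divisor decomposition, namely $F_{\le m}=\mathrm{span}\{\bar e_i : \alpha_i\le m\}\subset V_i$: the mod-$t$ reductions of bases satisfying (b) are precisely the bases of $V_i$ adapted to this flag, and a basis adapted simultaneously to it and to the $ord_{P_i}$-flag lifts (with each lift supported on the $e_i$ having $\alpha_i\le\alpha_j$) to a basis satisfying both (a) and (b). Note that for vector bundles the values $a_j(P_i)$ may repeat up to $r$ times, but neither argument is affected by this.

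The one step that is genuinely wrong as written is the identification of the elementary-divisor flag with the vanishing flag at $Q_i$. You correctly flag this as the main obstacle, and indeed it fails: the relation between $\alpha_j$ and the vanishing at $Q_i$ is only the \emph{inequality} $d_1-ord_{Q_i}\sigma_j\le\alpha_j$ of Proposition \ref{vansigma}, which is an equality only for the special sections counted in Corollary \ref{Remark}; for vector bundles (and even for line bundles on a component where the bundle is non-special) the two flags genuinely differ. Fortunately the identification is also unnecessary: the two-flag lemma applies to any pair of filtrations of $V_i$, so you should run your third paragraph with the $\alpha$-flag $F_{\le m}$ in place of the $Q_i$-vanishing flag. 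With that substitution (and attention to the direction of the support condition $\nu(c_{ji})\ge\alpha_i-\alpha_j$, which dictates that lifts are supported on the $e_i$ with \emph{smaller or equal} $\alpha$), your proof is complete and is a legitimate variant of the cited one.
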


For a proof of this fact, see \cite[Lemma 1.2]{EH83}.

\begin{pr}\label{vansigma}
Let $\sigma_j$ be a basis of $\pi_*{\mathcal
E}_{C_i}$such that $t^{\alpha _j}\sigma _j$ is a basis of
$\pi_*({\mathcal E}_{i+1})$.  Assume that  $E_i$ is a direct sum of  indecomposable
vector bundles of rank $r'$ and degree $r'd_1+d'_2, \ 0\le d'_2<r'$.

Then, the orders of vanishing of the
$\sigma_j$ at the nodes satisfy
\begin{align}\label{ineq}ord_{P_i}(\sigma_j)\le d_1-ord_{Q_i}\sigma_j\le \alpha _j\le
ord_{P_{i+1}}t^{\alpha_j}\sigma_j.\end{align}
Moreover, if equality holds, then $\sigma_j$ vanishes at
$P_i,Q_i$ as a section of $E_i$ to orders adding up to $d_1$.
\end{pr}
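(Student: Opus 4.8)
The plan is to establish the chain of inequalities in \eqref{ineq} by exploiting the structure of $E_i$ as a direct sum of indecomposable bundles of a single slope, together with the defining properties of a limit linear series from Definition \ref{lls}. First I would analyze the role of the integer $\alpha_j$: by Lemma's part (b), multiplication by $t^{\alpha_j}$ converts the basis $\sigma_j$ of $\pi_*{\mathcal E}_i$ into a basis of $\pi_*({\mathcal E}_{i+1})$, so $\alpha_j$ records exactly how the restriction to $C_i$ twists into the restriction to $C_{i+1}$ across the node gluing $Q_i$ to $P_{i+1}$. The rightmost inequality $\alpha_j \le ord_{P_{i+1}} t^{\alpha_j}\sigma_j$ should follow because $t^{\alpha_j}\sigma_j$, being an honest section of ${\mathcal E}_{i+1}$ supported on the family, must vanish at $P_{i+1}$ to order at least $\alpha_j$ (the twisting forces vanishing), and the leftmost inequality $ord_{P_i}(\sigma_j) \le d_1 - ord_{Q_i}\sigma_j$ is equivalent to $ord_{P_i}\sigma_j + ord_{Q_i}\sigma_j \le d_1$, a bound on the total vanishing at the two nodes of $C_i$.

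The heart of the argument is this total-vanishing bound, and I would derive it from the semistability forced by the indecomposable summand structure. Here the hypothesis that $E_i$ is a direct sum of indecomposable bundles of rank $r'$ and degree $r'd_1 + d_2'$ with $0 \le d_2' < r'$ is essential: each such summand has slope just above $d_1$, and on an elliptic curve a section of an indecomposable bundle of this slope cannot vanish at two generic points $P_i, Q_i$ to total order exceeding $d_1$ without forcing a destabilizing subsheaf. Concretely, if $\sigma_j$ vanished at $P_i$ and $Q_i$ to orders summing to more than $d_1$ on a summand of rank $r'$ and degree $r'd_1 + d_2'$, then that section would generate a sub-line-bundle (or subsheaf) of degree too small relative to the slope, contradicting the fact that a generic indecomposable bundle of this slope is semistable and its sections impose the expected vanishing. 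I would make this precise by passing to a single indecomposable summand, using the classification of vector bundles on elliptic curves (Atiyah) to control which orders of vanishing at generic points are achievable, and reading off that the maximal total vanishing order for a nonzero section is exactly $d_1$.

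The middle inequality $d_1 - ord_{Q_i}\sigma_j \le \alpha_j$ then links the two sides: the twisting exponent $\alpha_j$ must be at least the ``deficiency'' $d_1 - ord_{Q_i}\sigma_j$ because condition (III) of Definition \ref{lls} requires $ord_{P_{i+1}} s_{i+1}^t + ord_{Q_i} s_i^t \ge a$ for glued sections, and after normalizing by the degree conventions $\sum_i d_i - r(M-1)a = d$, this compatibility at the node translates into the stated lower bound on $\alpha_j$. I would verify that the bookkeeping of $a$ versus $d_1$ is consistent across components so that the local node condition yields exactly the inequality displayed. Finally, for the equality clause, I would trace back through the chain: equality throughout forces $ord_{P_i}\sigma_j + ord_{Q_i}\sigma_j = d_1$, which is precisely the statement that $\sigma_j$ vanishes at $P_i, Q_i$ as a section of $E_i$ to orders adding to $d_1$.

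The main obstacle I anticipate is the total-vanishing bound on the elliptic component, that is, proving that a section of a generic indecomposable bundle of slope $d_1 + d_2'/r'$ on an elliptic curve cannot vanish at two generic points to total order exceeding $d_1$. This requires genuine input from the geometry of bundles on elliptic curves rather than formal manipulation of the limit-linear-series axioms, and the genericity of the points $P_i, Q_i$ and of the indecomposable summands (hypotheses built into Proposition \ref{Petri}) must be used to rule out the exceptional vanishing configurations. The other steps are essentially bookkeeping with the definitions, but this geometric estimate is where the real content lies.
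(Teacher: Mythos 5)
The paper does not actually prove Proposition \ref{vansigma}: it cites \cite[Prop.~4.2]{TeiPet} (which in turn rests on \cite[Lemma 1.2]{EH83}), so there is no internal argument to compare against. Judged on its own terms, your proposal correctly isolates the one piece of input that goes beyond the line-bundle case, namely the bound $ord_{P_i}\sigma_j+ord_{Q_i}\sigma_j\le d_1$, and your mechanism for it is right in outline: a section vanishing to total order $m$ at $P_i,Q_i$ generates a subsheaf whose saturation is a line subbundle of degree at least $m$, and by Atiyah every indecomposable bundle on an elliptic curve is semistable, so $m\le\lfloor\mu(E_i)\rfloor=d_1$. Two slips there: the offending subbundle would have degree too \emph{large} (it is destabilizing), not ``too small''; and neither genericity of $P_i,Q_i$ nor genericity of the summands is needed for this step --- semistability of indecomposables is unconditional, and genericity only enters later, in Corollary \ref{Remark}, when counting the sections that achieve equality.

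The genuine gap is in the middle and right inequalities, $d_1-ord_{Q_i}\sigma_j\le\alpha_j\le ord_{P_{i+1}}t^{\alpha_j}\sigma_j$. These do not follow from condition (III) of Definition \ref{lls}: that condition is an axiom imposed on the abstract limit series (itself extracted, for limits of honest series, from exactly the computation you are skipping), and it constrains $ord_{P_{i+1}}s^t_{i+1}+ord_{Q_i}s^t_i$, not the $t$-adic exponent $\alpha_j$; one could satisfy (III) with $\alpha_j<d_1-ord_{Q_i}\sigma_j$, so your route cannot yield the middle inequality. Likewise ``the twisting forces vanishing'' names the right-hand conclusion rather than deriving it. What is actually required is the Eisenbud--Harris local analysis at the node $Q_i=P_{i+1}$: write the family as $xy=t$ near the node, express $\mathcal{E}_{i+1}$ as a twist of $\mathcal{E}_i$ by a divisor supported on $C_1\cup\dots\cup C_i$, and use condition (I) (sections of $E_i(-aP_i)$, $E_i(-aQ_i)$ are determined by their values at the nodes) to translate ``$\sigma_j$ vanishes to order $b$ at $Q_i$'' into the precise minimal power of $t$ needed to land in $\pi_*\mathcal{E}_{i+1}$, and dually to bound the vanishing of $t^{\alpha_j}\sigma_j$ at $P_{i+1}$. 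Without that computation the chain (\ref{ineq}), and with it the equality clause, is not established; this is precisely the content being imported from \cite{EH83,TeiPet}.
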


For a proof of this proposition, see \cite[Prop. 4.2]{TeiPet}.

It will be relevant to know when the inequalities above are equalities.
 From the last statement in the proposition above,
 this will happen only if there is a section of the vector bundle that 
 vanishes with a certain order, say $a$ at $P$ and order $d_1-a$ at $Q$. This implies 
 that $H^0(E(-aP-(d_1-a)Q)\ge 1$. The number of independent sections satisfying the condition is at 
 most $h^0(E(-aP-(d_1-a)Q)$.
 
 If $E$ is indecomposable of rank $r'$ and 
degree $r'd_1+d'_2, 0\le d'_2<r'$, there is a finite number of
sections (up to a constant) such that
$ord_{P_i}(s)+ord_{Q_i}(s)=d_1$. For
each possible vanishing $a$ at $P$ 
 there are at most $h^0(E(-aP-(d_1-a)Q))=d'_2$ independent sections satisfying 
 the condition. 
 
 If $L$ is a line bundle of degree $d_1$ on an elliptic curve with a section vanishing at $P$ to order
  $a$ and at $Q$ with order $d_1-a$, then $ L= {\mathcal
 O}(aP+(d_1-a)Q)$. If $L$ had another section vanishing at $P$ to order
  $b$ and at $Q$ with order $d_1-b$, then the two divisors 
  $aP+(d_1-a)Q$, $bP+(d_1-b)Q$ would be linearly equivalent and 
  $P-Q$ would be a torsion point in the elliptic curve. In particular, the points
  $P, Q$ would not be generic. So, under the assumption of genericity for the points, 
  there is at most one section of a line bundle whose order of vanishing
  at the two points adds up to the degree. Moreover, when this happens, the line bundle is special.
  Hence, if $E$ is the direct sum of $r$ line bundles of degree $d_1$, 
  the number of independent sections with maximum vanishing at the
nodes adding up to $d_1$ is at most the number of line bundles that are special
 among those  in the decomposition of the vector bundle as direct sum.
 
  The canonical line bundle on an elliptic curve is trivial and on a rational curve there is only
  one line bundle up to isomorphism. It follows that the canonical linear
series has restriction to the  component $C_i$ (\cite{EH83,Wel85}).
$$\omega_{i|C_i}= {\mathcal O}(2(\sum_{k\le i}g(C_k)-1)P_i+2(g-\sum_{k\le
i}g(C_k))Q_i)$$
Therefore, the restriction of a vector bundle to a component is special in the sense above if and only if its Serre dual is special.

We proved the following

\begin{cor}  \label{Remark}  If $E$ is  an indecomposable vector bundle of rank $r'$ and 
degree $r'd_1+d'_2, 0\le d'_2<r'$, for each possible vanishing $a$ at $P$ 
 there are at most $d'_2$ independent sections satisfying equality in (\ref{ineq}).  If $E$ is the direct sum of $r$ line bundles of degree $d_1$, 
  the number of independent sections that satisfy an equality in 
  (\ref{ineq}) is at most the number of line bundles that are special
 among those  in the decomposition of the vector bundle as direct sum. The number of special line bundles appearing 
 in the direct sum decomposition of a vector bundle is the same as the number appearing in the decomposition 
 of its Serre dual.
\end{cor}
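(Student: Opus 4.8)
The plan is to extract the equality condition directly from Proposition \ref{vansigma} and then convert each of the two allowed types of restriction into a cohomological count on the relevant elliptic component. By the last sentence of Proposition \ref{vansigma}, a section $\sigma_j$ realizing equality in (\ref{ineq}) must vanish at $P_i$ and $Q_i$, as a section of $E_i$, to orders that add up to $d_1$; if its order at $P_i$ is $a$, then its order at $Q_i$ is $d_1-a$, so $\sigma_j\in H^0(E_i(-aP_i-(d_1-a)Q_i))$. Consequently, for a fixed vanishing $a$ at $P_i$, the number of independent sections satisfying equality is bounded by $h^0(E_i(-aP_i-(d_1-a)Q_i))$, and the whole statement reduces to estimating this dimension for the two cases allowed in Proposition \ref{Petri}.

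First I would treat the indecomposable case. The twisting line bundle $\mathcal{O}(-aP_i-(d_1-a)Q_i)$ has degree $-d_1$, so $E_i(-aP_i-(d_1-a)Q_i)$ has rank $r'$ and degree $r'd_1+d'_2-r'd_1=d'_2$, and it remains indecomposable since tensoring by a line bundle preserves indecomposability. On the elliptic component Riemann--Roch gives $h^0-h^1=d'_2$ (as $g=1$), so the point is to kill $h^1$. Here I would invoke the Atiyah classification: a positive-slope indecomposable bundle on an elliptic curve has $h^1=0$, which covers $0<d'_2<r'$; in the boundary case $d'_2=0$ a generic degree-zero indecomposable bundle (available by the genericity hypothesis of Proposition \ref{Petri}) has no sections, so again $h^0=d'_2=0$. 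In all cases $h^0(E_i(-aP_i-(d_1-a)Q_i))=d'_2$, which is the first assertion; this is the step I expect to be most delicate, precisely because the count $h^0=d'_2$ at $d'_2=0$ is not forced by Riemann--Roch and genuinely needs the genericity of the indecomposable summand (the Atiyah bundle itself would contribute an extra section).

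Next I would handle $E_i=\bigoplus_{j=1}^{r}L_j$ with each $L_j$ of degree $d_1$. A summand $L_j$ carries a section vanishing at $P_i,Q_i$ to orders $a,d_1-a$ exactly when $L_j=\mathcal{O}(aP_i+(d_1-a)Q_i)$; I will call such a summand \emph{special}. If a special $L_j$ had two independent such sections, the divisors $aP_i+(d_1-a)Q_i$ and $bP_i+(d_1-b)Q_i$ would be linearly equivalent, forcing $(a-b)(P_i-Q_i)\sim 0$ and hence $P_i-Q_i$ to be a torsion point, which the genericity of the nodes excludes. Thus each special summand contributes at most one section and the remaining summands contribute none, so the number of sections realizing equality in (\ref{ineq}) is at most the number of special summands.

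Finally, for the Serre-dual statement I would use the explicit restriction $\omega_{|C_i}=\mathcal{O}(2(\sum_{k\le i}g(C_k)-1)P_i+2(g-\sum_{k\le i}g(C_k))Q_i)$, whose essential feature is that it is supported at the two nodes. Since $L$ is special exactly when $L=\mathcal{O}(aP_i+cQ_i)$ with $a+c=\deg L$, the dual summand $\omega_{|C_i}\otimes L^{-1}$ is then again of the form $\mathcal{O}(a'P_i+c'Q_i)$ with $a'+c'=\deg(\omega_{|C_i}\otimes L^{-1})$, hence also special, and running the computation backwards gives the converse; so taking Serre duals matches the special summands of $E_i$ bijectively with those of $\omega_{|C_i}\otimes E_i^*$, establishing the last assertion. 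All the genuine content sits in the two $h^0$-estimates of the preceding paragraphs, the dual statement being a formal consequence of the nodes-supported shape of $\omega_{|C_i}$.
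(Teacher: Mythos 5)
Your proposal is correct and follows essentially the same route as the paper: reduce equality in (\ref{ineq}) to a section of $E_i(-aP_i-(d_1-a)Q_i)$ and bound by $h^0$, get $d'_2$ in the indecomposable case, use the torsion-point/genericity argument for sums of line bundles, and read off the duality statement from the fact that $\omega_{|C_i}$ is supported at the two nodes. The only difference is that you spell out the Riemann--Roch and Atiyah-classification computation of $h^0(E_i(-aP_i-(d_1-a)Q_i))=d'_2$ (including the genuinely delicate $d'_2=0$ case), which the paper simply asserts.
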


Define the order of vanishing of a section $\rho$ of
$\pi_*({\mathcal E}_i)\otimes \pi_*(\omega\otimes {\mathcal E}_i^*)$ as  in  \cite{EH83, TeiPet}:

\begin{define} Let $\{\sigma^j\}$ be a basis of
$\pi_*{\mathcal E}_i$ such that their orders of vanishing are the orders
of vanishing of the linear series  at $P_i$.  Let $\{\tau^l\}$ be a basis of
$\pi_*(\omega\otimes {\mathcal E}_i^*)$ such that their orders of vanishing are the orders
of vanishing of the linear series  at $P_i$.
Write $\rho =\sum_{j,
l}f_{j,l}\sigma^j\otimes \tau^l$ with
$f_{jl}$ functions on the discrete valuation ring. We say $ord_{P_i}\rho \ge \lambda$ if for every
$j,l$ with $f_{j,l}(P_i)\not=0$,
$ord_{P_i}\sigma^j+ord_{P_i}\tau^l\ge \lambda$
\end{define}

 As in
\cite[p. 278]{EH83}, one can identify $\pi_*{\mathcal E_i}, \pi_*(\omega\otimes {\mathcal E}_i^*)$ and $\pi_*
({\mathcal E}_i\otimes \omega\otimes {\mathcal E}_i^*)$ with submodules of $(\pi_*{\mathcal E}_i)_{\eta},  \pi_*(\omega\otimes {\mathcal E}_i^*)_{\eta}$ and
$\pi_* ({\mathcal E}_i\otimes \omega\otimes {\mathcal E}_i^*)_{\eta}$ and also with submodules of
$\pi_*{\mathcal E}_{i+1},  \pi_*(\omega\otimes {\mathcal E}_{i+1}^*)$ and $\pi_*({\mathcal E}_{i+1}\otimes\omega\otimes  {\mathcal E}_{i+1}^*)$. 

Let
$$\rho \in \pi_*({\mathcal E_1}\otimes \omega\otimes {\mathcal E_1}^*) .$$
One can find an integer $b_i$ such that 
$$\rho_i =t^{b_i}\rho \in \pi_*({\mathcal E_i}\otimes \omega\otimes {\mathcal E}_i^*)-t\pi_*({\mathcal E_i}\otimes \omega\otimes {\mathcal E}_i^*).$$

Then for  $a_i=b_{i+1}-b_i$, one has
$$\rho_{i+1} =t^{a_i}\rho_i \in \pi_*({\mathcal E}_{i+1}\otimes \omega\otimes {\mathcal E}_{i+1}^*)-t\pi_*({\mathcal E}_{i+1}\otimes \omega\otimes {\mathcal E}^*)_{i+1}.$$

\begin{pr}
 \label{ordrho} Fix a component $C_i$. Assume
that $\rho=\sum
 f_{jl}\sigma^j\otimes\tau^l$ where
 the $\sigma^j$ are a basis of $\pi_* {\mathcal E}_i$ such that
 $t^{\alpha_j}\sigma^j$ is a basis of $\pi_*{\mathcal E}_{i+1}$ and the 
 $\tau^l$ are a basis of $\pi_* (\omega\otimes {\mathcal E}^*)_i$ such that
 $t^{\beta_l}\tau^l$ is a basis of $\pi_*(\omega\otimes {\mathcal E}_{i+1}^*)$
$$t^{\gamma^i}\rho \in \pi_*({\mathcal E}_i\otimes \omega\otimes {\mathcal E}_i^*)-t (\pi_*({\mathcal E}_i\otimes \omega\otimes {\mathcal E}_i^*)).$$ Then $$ord_{P_i}(\rho )=min_{\{j,l|
\nu(f_{jl})=0\}}ord_{P_i}(\sigma^j)+ord_{P_i}(\tau^l)$$
$$\gamma ^i=max_{\{ j,l\} }(\alpha _j+\beta_l-\nu(f_{jl}).$$
\end{pr}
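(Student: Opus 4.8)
The two assertions are proved independently. The formula for $\mathrm{ord}_{P_i}(\rho)$ is a direct unwinding of the definition of order of vanishing given above. Since each $f_{jl}$ is a function on the discrete valuation ring, the condition $f_{jl}(P_i)\neq 0$ is equivalent to $\nu(f_{jl})=0$. The definition states that $\mathrm{ord}_{P_i}(\rho)\ge\lambda$ precisely when $\mathrm{ord}_{P_i}(\sigma^j)+\mathrm{ord}_{P_i}(\tau^l)\ge\lambda$ for every pair $(j,l)$ with $\nu(f_{jl})=0$; taking $\lambda$ as large as possible yields the stated minimum. The only point to verify is that the chosen bases have orders of vanishing equal to those of the respective linear series at $P_i$, which is part of the hypothesis (for the $\sigma^j$ this is the basis furnished by the lemma of \cite{EH83} quoted above).

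For $\gamma^i$ I would first translate the defining condition into a lattice valuation. Set $P_i:=\pi_*(\mathcal{E}_i\otimes\omega\otimes\mathcal{E}_i^*)$ and, for $x$ in the generic fiber, let $v_i(x)$ be the largest $n$ with $x\in t^nP_i$; then the requirement $t^{\gamma^i}\rho\in P_i-tP_i$ is exactly $\gamma^i=-v_i(\rho)$. Expanding $\rho=\sum_{j,l}f_{jl}\,e_{jl}$ with $e_{jl}:=\sigma^j\cdot\tau^l$ the image of $\sigma^j\otimes\tau^l$ under the cup product, the valuation of a sum is at least the minimum of the valuations of its terms, so $v_i(\rho)\ge\min_{j,l}\bigl(\nu(f_{jl})+v_i(e_{jl})\bigr)$, with equality unless the terms realizing the minimum cancel. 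The whole computation therefore reduces to (a) evaluating $v_i(e_{jl})$ for each monomial and (b) ruling out cancellation of the leading terms.

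Step (a) is where the integers $\alpha_j,\beta_l$ enter, and I expect it to be the main obstacle. Because $\{t^{\alpha_j}\sigma^j\}$ and $\{t^{\beta_l}\tau^l\}$ are bases of $\pi_*\mathcal{E}_{i+1}$ and $\pi_*(\omega\otimes\mathcal{E}_{i+1}^*)$, the products $t^{\alpha_j+\beta_l}e_{jl}=(t^{\alpha_j}\sigma^j)\cdot(t^{\beta_l}\tau^l)$ are cup products of basis sections adapted to the $(i+1)$-st component, while the $e_{jl}$ are adapted to the $i$-th component. Comparing the two lattices $P_i$ and $P_{i+1}$ should then show that each monomial carries precisely the valuation shift $\alpha_j+\beta_l$, so that $v_i(\rho)=\min_{j,l}\bigl(\nu(f_{jl})-\alpha_j-\beta_l\bigr)$ and hence $\gamma^i=\max_{j,l}\bigl(\alpha_j+\beta_l-\nu(f_{jl})\bigr)$. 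What makes this comparison delicate is that $\mathcal{E}_{i+1}$ is obtained from $\mathcal{E}_i$ by a modification supported at the node $Q_i=P_{i+1}$, so that the modifications in the two tensor factors of $\mathcal{E}_{i+1}\otimes\omega\otimes\mathcal{E}_{i+1}^*$ are dual to one another; keeping track of how this affects the $t$-adic filtration of $P_i$ is exactly the bookkeeping carried out in \cite{EH83} and \cite{TeiPet}.

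Finally, for step (b) I would reduce modulo $t$ and check that the products $\sigma^j\cdot\tau^l$ whose coefficients attain the extreme value $\alpha_j+\beta_l-\nu(f_{jl})=\gamma^i$ restrict to linearly independent sections on the central component $C_i$; this prevents the minimum defining $v_i(\rho)$ from being strict and secures the equality rather than merely an inequality. Here the genericity of the nodes and the classification of sections with maximal vanishing in Proposition \ref{vansigma} and Corollary \ref{Remark} constrain the possible linear relations among the relevant products. As the introduction emphasizes, this analysis parallels that of \cite{TeiPet} but is simpler, since only the limit bundles occurring in the constructions of \cite{Tei05} and \cite{Tei08b} — whose restrictions to each elliptic component are as in the hypothesis of Proposition \ref{Petri} — need to be treated.
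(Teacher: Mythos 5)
Your handling of the first formula is correct: it is the definition of $ord_{P_i}(\rho)$ unwound, using that $f_{jl}(P_i)\neq 0$ means $\nu(f_{jl})=0$ and that, by the lemma quoted from \cite{EH83}, a single basis can be chosen that is simultaneously adapted to the vanishing orders at $P_i$ and to the transition to $C_{i+1}$. The paper itself prints no proof of this proposition; the intended argument is the one from \cite{EH83} (and its higher-rank version in \cite{TeiPet}), and for the formula for $\gamma^i$ that argument is a purely formal computation in a free module over the discrete valuation ring: $\{\sigma^j\otimes\tau^l\}$ is a basis of $\pi_*{\mathcal E}_i\otimes\pi_*(\omega\otimes{\mathcal E}_i^*)$ and $\{t^{\alpha_j}\sigma^j\otimes t^{\beta_l}\tau^l\}$ is a basis of $\pi_*{\mathcal E}_{i+1}\otimes\pi_*(\omega\otimes{\mathcal E}_{i+1}^*)$, so writing $t^{\gamma}\rho=\sum_{j,l} u_{jl}\,t^{\gamma+\nu(f_{jl})-\alpha_j-\beta_l}\,\bigl(t^{\alpha_j}\sigma^j\otimes t^{\beta_l}\tau^l\bigr)$ with $u_{jl}$ units, membership in the lattice (and in $t$ times the lattice) is read off coefficient by coefficient, and uniqueness of coefficients with respect to a basis makes cancellation impossible. (The displayed membership condition should be read in the tensor product of pushforwards for the component $C_{i+1}$, consistently with the definition of $a_i=b_{i+1}-b_i$ given just before the proposition; taken literally for $C_i$ it would give $\gamma^i=-\min\nu(f_{jl})$.)

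Your route for $\gamma^i$ is genuinely different and, as set up, fails. You pass to the cup-product images $e_{jl}=\sigma^j\cdot\tau^l$ and compute a valuation of $\sum f_{jl}e_{jl}$ inside $\pi_*({\mathcal E}_i\otimes\omega\otimes{\mathcal E}_i^*)$. But the proposition is applied precisely to elements $\rho_i$ lying in the kernel of the Petri map, for which $\sum f_{jl}e_{jl}=0$: your $v_i(\rho)$ is then $+\infty$, and the cancellation of leading terms that you propose to rule out in step (b) is exactly what occurs. The quantity $\gamma^i$ is a valuation of $\rho$ as an element of the tensor product of the two pushforwards, not of its image under cup product; in that free module the elements $\sigma^j\otimes\tau^l$ are linearly independent by construction, so both of your steps (a) and (b) — which you in any case leave as expectations rather than arguments — are unnecessary once the computation is placed in the right module. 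Replace the second half of your proposal by the coefficient computation above.
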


We now assume that the kernel of the Petri map is non-trivial on the
generic curve. We can then find a section $\rho _{\eta}$ in the
kernel of the Petri map over the generic point. As above, we can
find a $\rho_i$ for each $i, 1\le i\le M$, in the kernel of the map
$$\pi_*({\mathcal E}_i)\otimes \pi_* (\omega\otimes {\mathcal E}_i^*) \rightarrow (\pi_*({\mathcal E}_i\otimes  (\omega\otimes {\mathcal E}_i^*) )),$$
with $\rho_i \notin t\pi_*({\mathcal E}_i\otimes  (\omega\otimes {\mathcal E}_i^*) )$.

\begin{pr}\label{E1}
Assume that the restriction of the vector bundle to the elliptic curve $C_i$
is a direct sum of $h$ generic indecomposable vector bundles of rank $r'$ and degree $r'd_1+d'_2$ where $0\le d'_2<r',\ h=gcd(r,d)$.

Then, $$ord_{P_{i+1}}(\rho_{i+1})\ge ord_{P_{i}}(\rho_{i})+2.$$

\end{pr}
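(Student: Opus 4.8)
The plan is to track how the order of vanishing at the node $P_i$ increases as we pass from the component $C_i$ to the next component $C_{i+1}$, following the method of \cite{EH83, TeiPet}. Recall that $\rho_{i+1} = t^{a_i}\rho_i$ with $a_i = \gamma^{i+1}$ in the language of Proposition \ref{ordrho}, and that the transition indices $\gamma^i$ are controlled by the vanishing sequences $\alpha_j$ (for sections of $\mathcal E_i$) and $\beta_l$ (for sections of $\omega \otimes \mathcal E_i^*$). By Proposition \ref{ordrho}, $\mathrm{ord}_{P_i}(\rho_i)$ is the minimum over the ``surviving'' pairs $(j,l)$ (those with $\nu(f_{jl}) = 0$) of $\mathrm{ord}_{P_i}(\sigma^j) + \mathrm{ord}_{P_i}(\tau^l)$, while $\gamma^i$ is the maximum of $\alpha_j + \beta_l - \nu(f_{jl})$. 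The goal is an increment of at least $2$ in the order of vanishing at $P$ from one component to the next.

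First I would combine the two inequalities from Proposition \ref{vansigma}: for the section basis, $\mathrm{ord}_{P_i}(\sigma^j) \le d_1 - \mathrm{ord}_{Q_i}(\sigma^j) \le \alpha_j \le \mathrm{ord}_{P_{i+1}}(t^{\alpha_j}\sigma^j)$, and the parallel chain for the $\tau^l$ using the Serre dual $\omega \otimes E^*$, which has the same degree structure on an elliptic curve (degree $0$ canonical bundle) so Proposition \ref{vansigma} applies verbatim with $\beta_l$ in place of $\alpha_j$. The key point is that $\mathrm{ord}_{P_{i+1}}(\rho_{i+1}) \ge \min_{\{j,l\}} \bigl(\mathrm{ord}_{P_{i+1}}(t^{\alpha_j}\sigma^j) + \mathrm{ord}_{P_{i+1}}(t^{\beta_l}\tau^l)\bigr)$ over the pairs surviving in $\rho_{i+1}$, and likewise $\mathrm{ord}_{P_i}(\rho_i) = \min \bigl(\mathrm{ord}_{P_i}(\sigma^j) + \mathrm{ord}_{P_i}(\tau^l)\bigr)$. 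So one wants to show that for each relevant pair, the sum of orders at $P_{i+1}$ exceeds the sum at $P_i$ by at least $2$.

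The mechanism producing the gain of $2$ is the interplay between the vanishing at $P$ and at $Q$ on the elliptic curve $C_i$, where the canonical restriction contributes a fixed even degree at each node. The chain of inequalities in (\ref{ineq}) shows that each of $\mathrm{ord}_{P_i}(\sigma^j) + \mathrm{ord}_{P_i}(\tau^l)$ and $\mathrm{ord}_{P_{i+1}}(t^{\alpha_j}\sigma^j) + \mathrm{ord}_{P_{i+1}}(t^{\beta_l}\tau^l)$ differ by the slack in the respective inequalities; I would add up the slack for the section and for the dual section and argue that, because $\rho_i$ lies in the kernel of the cup-product (the Petri map), the extreme case where all inequalities are simultaneously equalities cannot occur for the pair realizing the minimum at $P_i$ unless the section and dual section are genuinely special in the sense of Corollary \ref{Remark}. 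This is where the genericity hypothesis on the indecomposable summands enters decisively: the bundle is chosen so that the number of independent sections achieving equality in (\ref{ineq}) is exactly $d'_2$ per vanishing value (or the number of special line bundles), which is too few to let a kernel element concentrate entirely on equality-achieving pairs. The main obstacle, and the step I expect to require the most care, is precisely this combinatorial/linear-algebra argument showing that a nonzero element of the Petri kernel cannot be supported only on pairs saturating (\ref{ineq}); this forces at least one unit of slack at each node, and summing the two nodes yields the promised increment of $2$. Controlling this uniformly over all surviving pairs, rather than just the minimizing one, is the delicate bookkeeping that the genericity of the restricted bundle is designed to make tractable.
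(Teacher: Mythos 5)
Your framework---tracking $\mathrm{ord}_{P_i}(\rho_i)$ via Propositions \ref{vansigma} and \ref{ordrho} and looking for one unit of slack on the $\sigma$ side and one on the $\tau$ side---is the right skeleton, but the step you yourself flag as ``the step I expect to require the most care'' is exactly the content of the proposition, and your sketch of it does not close. Working directly with the indecomposable summands and the bound of Corollary \ref{Remark}, you assert that $d'_2$ saturating sections per vanishing value is ``too few to let a kernel element concentrate entirely on equality-achieving pairs.'' That is not evident: $d'_2$ can be as large as $r'-1$, the same bound holds on the Serre-dual side, and for an indecomposable $E_i$ the Petri map does not split, so you have no a priori structural reason (such as ``each block of a kernel element has at least two terms with independent $\sigma$'s'') to play the saturating sections off against one another. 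As stated, your argument reduces to an unproved combinatorial claim.

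The paper's proof supplies the missing idea by a degeneration you never invoke: each generic indecomposable summand of rank $r'$ and degree $r'd_1+d'_2$ is specialized to a direct sum of $d'_2$ generic line bundles of degree $d_1+1$ and $r'-d'_2$ generic line bundles of degree $d_1$ (so $E_i$ degenerates to $L^1\oplus\cdots\oplus L^r$), and a kernel element for the generic bundle induces one for the degeneration. After this reduction the Petri map splits into the $r^2$ rank-one blocks $H^0(L^j)\otimes H^0(\omega\otimes (L^l)^*)\to H^0(L^j\otimes\omega\otimes (L^l)^*)$, each nonzero block of $\rho$ must contain at least two terms $\sigma^j\otimes\tau^l+\sigma^{\prime j}\otimes\tau^{\prime l}$ with $\sigma^j,\sigma^{\prime j}$ (and $\tau^l,\tau^{\prime l}$) linearly independent, and genericity of the individual line bundles then shows that at most one of the two $\sigma$'s (and at most one of the two $\tau$'s) can achieve the extremal sum of vanishing orders at $P_i,Q_i$. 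This is what makes the inequalities in (\ref{ineq}) strict on both factors and yields the increment of $2$. A smaller point: the ``$+2$'' comes from one unit on the $\sigma$ factor plus one unit on the $\tau$ factor in the passage from $C_i$ to $C_{i+1}$, not from ``summing the two nodes'' as your last sentence suggests. Without the degeneration to line bundles, your route would have to prove the concentration claim for genuinely indecomposable bundles, which is a substantially harder statement than anything established in the paper.
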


\begin{proof} An indecomposable vector bundle of rank $r'$ and degree $r'd_1+d'_2$ can be deformed
 to a direct sum of $d'_2$ generic line bundles of degree $d_1+1$ and $r'-d'_2$ generic line bundles of
  degree $d_1$. Hence, the vector bundle $E_i$ can be deformed to a direct sum of $d_2$
   generic line bundles of degree $d_1+1$ and $r-d_2$ generic line bundles of degree $d_1$ 
   where $r=hr',\ d_2=hd'_2$. That is, we can degenerate $E_i$ to 
 $$L^1\oplus \cdots \oplus L^{d_2}\oplus \cdots \oplus L^r,$$
 where $L^1, \dots ,L^{d_2}$ have degree $d_1+1$ and $L^{d_2+1}, \dots, L^r$ have degree $d_1$.
 
 An element in the kernel of the Petri map for the generic vector bundle
  would give rise to an element of the Petri map for the degeneration. 
 Using the decomposition of $E_i$ into a direct sum, 
the Petri map restricted to the component $C_i$ splits into a direct sum of maps 
$$H^0(L^j)\otimes H^0(\omega \otimes (L^l)^*)\to H^0(L^j\otimes \omega \otimes (L^l)^*).$$
Each component of $\rho$ in this splitting should be in the kernel of the corresponding cup-product map.
 So, in each term of the decomposition, either $\rho$ is identically zero or has two non-zero terms.

If we decompose $\rho $ as in the decomposition above, the order of vanishing of $\rho$
 at a node is the minimum of 
the orders of vanishing of its $r^2$ coordinates.  The expression of a $\rho$ in the kernel of the
 Petri map will be  of the form 
$$(\dots, \sigma ^j\otimes \tau^l+ \sigma^{'j}\otimes \tau^{'l}+\cdots, \dots ).$$
Here we can assume that  $\sigma ^j, \ \sigma^{'j}$ are linearly independent. If $L^j$ has degree $d_1+1$ at most one of the two has  vanishing adding up to $d_1$ 
between  $P_i, Q_i$, the other having smaller vanishing. If $L^j$ has degree $d_1$ at most one of the two has  vanishing $d_1-1$ between  $P_i, Q_i$, the other having smaller vanishing. 

 Similarly, as $\tau ^l, \ \tau^{'l}$ can be assumed to be linearly independent.
 If $L^l$ has degree $d_1+1$ at most one of the two has  vanishing $2g-2-d_1-2=2g-d_1-4$ between  $P_i, Q_i$,
  the other having smaller vanishing.
   If $L^j$ has degree $d_1$ at most one of the two has  vanishing $2g-d_1-3$ between  $P_i, Q_i$,
    the other having smaller vanishing. 
    
    It follows that the inequalities in equation \ref{ineq} are strict for both the $\sigma $ and the $\tau $.
    Therefore the result follows from the definition of the order of vanishing for $\rho$. 
 \end{proof}

\begin{pr}\label{+Ld}
Let $C_i$ be an elliptic curve such that the restriction of the
vector bundle to $C_i$ is the direct sum of line bundles $L^j_i$ all of
degree $d_1$. Then,
$$ord_{P_{i+1}}(\rho_{i+1})\ge ord_{P_i}(\rho_i)+1.$$
The inequality is strict unless  the non-zero terms of $\rho_i$ that give the
vanishing at $P_i$ can be written as
$$(\dots, \sigma^j _0\otimes \tau^l+  \sigma^j\otimes\tau^l_0, \dots),$$
 with $\sigma^j _0,\tau^l _0$ being  the unique  sections of $L^j_i$, $\omega\otimes( L^l_i)^*$
that vanish at $P_i, Q_i$ with orders adding up to $d_1, \ 2g-2-d_1$. 

Assume also that the line bundles $L^j_i$ are either generic or of the form ${\mathcal O}(\lambda_iP_i+(d_1-\lambda_i)Q_i)$
for a fixed $\lambda_i$ that does not depend on $j$. Let $n(i)$ be the number of elliptic curves before $C_i$. If $ord_{P_i}(\rho_i)\ge 2n(i)-2$, then 
$$ord_{P_{i+1}}(\rho_{i+1})\ge 2n(i).$$

 When the inequalities for the
order of vanishing at $P_{i+1}$ are equalities, the terms of
$\rho_{i+1}$ that give the minimum vanishing at $P_{i+1}$ glue with
the terms of $\rho _i$ that give the minimum vanishing at $P_i$.
\end{pr}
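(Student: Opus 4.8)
The plan is to exploit the splitting of $E_i$ as a direct sum of line bundles to reduce everything to the scalar cup-products on the elliptic curve $C_i$, and then to run the order-of-vanishing bookkeeping of Propositions \ref{vansigma} and \ref{ordrho} block by block. Writing $E_i=\bigoplus_j L^j_i$ and $\omega\otimes E_i^*=\bigoplus_l \omega\otimes (L^l_i)^*$, the Petri map on $C_i$ is block-diagonal, so a kernel element $\rho_i$ decomposes into components $\rho^{jl}_i$, each in the kernel of the cup-product $H^0(L^j_i)\otimes H^0(\omega\otimes (L^l_i)^*)\to H^0(\omega\otimes L^j_i\otimes (L^l_i)^*)$. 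I would fix the adapted bases $\sigma^j,\tau^l$ (so that $t^{\alpha_j}\sigma^j$ and $t^{\beta_l}\tau^l$ are bases on $C_{i+1}$), and recall from \ref{ordrho} that $ord_{P_i}(\rho_i)=\min_{\nu(f_{jl})=0}(ord_{P_i}\sigma^j+ord_{P_i}\tau^l)$, that the shift $a_i$ relating $\rho_i$ and $\rho_{i+1}$ equals $\max_{jl}(\alpha_j+\beta_l-\nu(f_{jl}))$, and hence that $ord_{P_{i+1}}(\rho_{i+1})\ge a_i\ge \alpha_{j_0}+\beta_{l_0}$ for the pair $(j_0,l_0)$ realizing the minimum at $P_i$.

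For the basic inequality I would apply \ref{vansigma} to the degree-$d_1$ bundles for the factors $\sigma^j$ and its evident analogue to the degree-$(2g-2-d_1)$ bundles $\omega\otimes (L^l_i)^*$ for the $\tau^l$, obtaining $\alpha_j\ge d_1-ord_{Q_i}\sigma^j$ and $\beta_l\ge (2g-2-d_1)-ord_{Q_i}\tau^l$. Adding these to the $C_i$-budget inequalities $ord_{P_i}+ord_{Q_i}\le\deg$ at the minimizing pair yields
$$ord_{P_{i+1}}(\rho_{i+1})\ge a_i\ge (2g-2)-\big(ord_{Q_i}\sigma^{j_0}+ord_{Q_i}\tau^{l_0}\big)\ge ord_{P_i}(\rho_i)+\delta,$$
where $\delta$ is the total strictness deficit in the two budget inequalities. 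Since any nonzero kernel element of a scalar cup-product has at least two terms with the two $\sigma$'s (resp. the two $\tau$'s) linearly independent, exactly as in the proof of \ref{E1}, Corollary \ref{Remark} (at most one section of each line bundle attains equality in \ref{ineq}) forces $\delta\ge 1$, giving the stated $+1$; chasing the equality case shows it can fail to be strict only when the minimal part of $\rho_i$ has the exchanged form $\sigma^j_0\otimes\tau^l+\sigma^j\otimes\tau^l_0$ with $\sigma^j_0,\tau^l_0$ the unique special sections.

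For the refined bound I would split into two cases according to the genericity hypothesis. When the $L^j_i$ are generic, no special section exists (Corollary \ref{Remark}), both budget inequalities are strict, so $\delta\ge 2$ and $ord_{P_{i+1}}(\rho_{i+1})\ge ord_{P_i}(\rho_i)+2\ge 2n(i)$ by the running hypothesis $ord_{P_i}(\rho_i)\ge 2n(i)-2$. The remaining case, where all $L^j_i$ equal the special bundle $\mathcal O(\lambda_iP_i+(d_1-\lambda_i)Q_i)$, is the crux: here equality in \ref{ineq} genuinely occurs and the budget count only yields $+1$. I would pin down the orders of the special sections using the restriction formula for the canonical series $\omega|_{C_i}=\mathcal O(2n(i)P_i+2(g-n(i)-1)Q_i)$, namely $ord_{P_i}\sigma_0=\lambda_i$ and $ord_{P_i}\tau_0=2n(i)-\lambda_i$, and then use the cup-product relation $\sigma_0\tau=-\sigma\tau_0$ forced on the exchanged pair to equate the two term-orders and to transfer the vanishing through the node. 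Combining this with the hypothesis $ord_{P_i}(\rho_i)\ge 2n(i)-2$ recovers $ord_{P_{i+1}}(\rho_{i+1})\ge 2n(i)$, and tracking which terms realize the minimum at each node gives the final gluing statement.

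The main obstacle is precisely this last case: when the line bundles are special the inequalities of \ref{ineq} are attained, so the crude degree count gives only a gain of one, and one must bring in the exact canonical vanishing $2n(i)$ together with the cup-product relation tying the two surviving factors of the exchanged configuration in order to upgrade the bound to $2n(i)$ and to establish the rigidity that lets the estimate propagate down the chain of elliptic components.
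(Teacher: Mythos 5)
Your overall strategy is the paper's: decompose $E_i=\bigoplus_j L^j_i$ so the Petri map is block-diagonal, note that each nonzero block of a kernel element has at least two terms with independent $\sigma$'s and independent $\tau$'s, and then use Proposition \ref{vansigma} together with Corollary \ref{Remark} (at most one section per line bundle attains equality in (\ref{ineq})) to extract at least one unit of slack at the minimizing pair. That correctly gives the basic $+1$, the characterization of the exchanged configuration $\sigma^j_0\otimes\tau^l+\sigma^j\otimes\tau^l_0$ as the only obstruction to $+2$, and the $+2$ in the generic case. Up to there you match the paper.

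The gap is in the all-special case, which you rightly call the crux but do not actually close. First, a bookkeeping error that matters: for the induction to start (first elliptic component, $ord_{P_i}(\rho_i)\ge 0=2n(i)-2$) the convention must be that $n(i)$ counts $C_i$ itself, so the restriction of the canonical series is $\mathcal O((2n(i)-2)P_i+2(g-n(i))Q_i)$, not $\mathcal O(2n(i)P_i+2(g-n(i)-1)Q_i)$ as you write; consequently the product $\sigma^j_0\tau^l_0$ of the two special sections vanishes to order exactly $2n(i)-2$ at $P_i$, not $2n(i)$. Second, and more importantly, the closing mechanism is missing. The paper's argument is: since $P_i,Q_i$ are generic, the only section of $\omega|_{C_i}$ whose vanishing orders at the two nodes sum to $2g-2$ has order exactly $2n(i)-2$ at $P_i$, namely $\sigma^j_0\tau^l_0$; but in the exchanged configuration the term realizing $ord_{P_i}(\rho_i)$ is $\sigma^j_0\otimes\tau^l$ (or $\sigma^j\otimes\tau^l_0$) with $\tau^l$ independent of $\tau^l_0$, so its order at $P_i$ cannot equal $2n(i)-2$. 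Combined with the hypothesis $ord_{P_i}(\rho_i)\ge 2n(i)-2$ this forces $ord_{P_i}(\rho_i)\ge 2n(i)-1$, and the already-proved $+1$ then yields $2n(i)$. Your proposed substitute --- using $\sigma_0\tau=-\sigma\tau_0$ to ``equate the two term-orders and transfer the vanishing through the node'' --- does not produce this: with your shifted value $2n(i)$ for $ord_{P_i}(\sigma_0\tau_0)$, excluding that order tells you nothing beyond the hypothesis $\ge 2n(i)-2$, and even with the corrected value you never state the step that upgrades $\ge 2n(i)-2$ to $\ge 2n(i)-1$. You need the explicit observation that the order $2n(i)-2$ at $P_i$ is attainable only by the one product $\sigma^j_0\tau^l_0$, which is exactly the term that the kernel condition excludes from the minimal part of $\rho_i$.
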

\begin{proof}
Write the restriction of the vector bundle to  $C_i$  as $L^1_i\oplus \cdots \oplus L^r_i$, where $L^j_i$ are line bundles of
degree $d_1$. 

Let $\rho =\sum f_{j,l}\sigma^j\otimes \tau ^l$ be an element in the kernel of the Petri map written in
terms of a basis $\sigma^j$ of $\pi_*{\mathcal E}_i$ and a basis $\tau^l$ of $\pi_*({\omega \otimes \mathcal E^*}_i)$ such that the $t^{\alpha
_j}\sigma ^j$ is a basis of $\pi_*{\mathcal E}_{i+1}$ and $t^{\beta
_l}\tau^l$ is a basis of $\pi_*({\omega \otimes \mathcal E^*}_{i+1})$. As the map is given by cup-product, 
an element of the form $\sigma^j\otimes
\tau^l$ is not in the kernel of the Petri
map. Hence, any $\rho$ in the kernel has in its expression at least
two terms of this type and we can assume that  the $\sigma$'s appearing in the expression  are
independent.

From   Prop. \ref{vansigma} and Cor. \ref{Remark}, $ord_{P_i}(\sigma^j)\le \alpha_j\le ord_{P_{i+1}}t^{\alpha
^j}\sigma^j$, and the first inequality being an equality for at most
$r$ independent sections $\sigma ^j_0$ that vanish at
$P_i,Q_i$ with orders adding up to $d_1$. Similarly, $ord_{P_i}(\tau^l)\le \beta_l\le ord_{P_{i+1}}t^{\beta
^l}\tau^l$, with the first inequality being an equality for at most
$r$ independent sections $\tau ^l_0$ that vanish at
$P_i,Q_i$ with orders adding up to $2g-2-d_1$.

Using the decomposition of $E$ into a direct sum, 
the Petri map restricted to the component $C_i$ splits into a direct sum of maps 
$$H^0(L^j_i)\otimes H^0(\omega \otimes (L^l_i)^*)\to H^0(L^j_i\otimes \omega \otimes (L^l_i)^*).$$
Each component of $\rho$ in this splitting should be in the kernel of the corresponding cup-product map. 
So, in each term of the decomposition, either $\rho$ is identically zero or has two non-zero terms.

Note that if we decompose $\rho $ as in the decomposition above, the order of vanishing of $\rho$ at a node is the minimum of the orders of vanishing of its $r^2$ coordinates.  The expression of a $\rho$ in the kernel of the Petri map, will be  of the form 
$$(\dots , \sigma ^j\otimes \tau^l+ \sigma^{'j}\otimes \tau^{'l}+\cdots, \dots).$$
Here we can assume that  $\sigma ^j, \ \sigma^{'j}$ are linearly independent.
 Then, at most one of the two has sum of vanishing at $P_i,\ Q_i$ being $d_1$. 
  Similarly, as $\tau ^l, \ \tau^{'l}$ can be assumed to be linearly independent.
   So, at most one of the two has sum of vanishing at $P_i,\ Q_i$ of $2g-2-d_1$. 
   Therefore, for at least one of $\sigma ^j, \sigma^{'j}$,
    $ord_{P_i}(\sigma^j)\le \alpha_j-1\le ord_{P_{i+1}}t^{\alpha^j}\sigma^j-1$, and similarly for the $\tau$. 

Hence, $ord_{P_{i+1}}(\rho _{i+1}) \ge ord_{P_i}(\rho _i)+1$ and
$ord_{P_{i+1}}(\rho _{i+1}) \ge ord_{P_i}(\rho _i)+2$ except in the
case when each line bundle is special and in the splitting of the Petri map, the expression of each 
component of $\rho$ is of the form $ \sigma ^j_0\otimes \tau^l+ \sigma^j\otimes \tau^l_0$
where $\sigma ^j_0,\  \tau^l_0$ are the sections of $L^j, \omega\otimes  (L^l)^*$
that vanish with orders at $P_i,Q_i$ adding up to $d_1,\ 2g-2-d_1$ respectively.

If the line bundles are generic, there are no sections that vanish at the nodes with orders adding up to $d_1$.
So, the order of vanishing of $\rho$ increases in at least two units. 

Assume now that the line bundles are special and all the same
 and that $ord_{P_i}(\rho_i)\ge 2n(i)-2$. We want to show that 
$$ord_{P_{i+1}}(\rho_{i+1})\ge 2n(i).$$
From the discussion above, we can assume that the restriction of $\rho$ to each line bundles is of the form
$ \sigma ^j_0\otimes \tau^l+ \sigma^j\otimes \tau^l_0$,  where $\sigma ^j_0$ (resp.  $\tau^l_0$)
 are the sections of $L^j$ (resp. $\omega\otimes  (L^l)^*$),
that vanish with orders at $P_i,Q_i$ adding up to $d_1$ (resp. $2g-2-d_1$). Then, 
$\sigma ^j_0 \tau^l_0$ is a section of $\omega_i={\mathcal O}((2n(i)-2)P_i-2(g-n(i))Q_i)$.
 As $\sigma ^j_0 $ vanishes to order  adding up to $d_1$ between $P_1, Q_1$ and 
$ \tau^l_0$ vanishes to order $2g-2-d_1$  between $P_i, Q_i$,  $\sigma ^j_0\tau^l_0$ vanishes to order $2g-2$ between $P_i, Q_i$. As $P_i, Q_i\in C_i$ are generic,
 the only section of 
${\mathcal O}((2n(i)-2)P_i-2(g-n(i))Q_i)$ vanishing to order precisely $2g-2$ between $P_i, Q_i$, vanishes to order precisely $2n(i)-2$ at $P_i$.
Then,  
$$2n(i)-2\le ord_{P_i}(\rho_i)=ord_{P_i}(\sigma ^j_0)+ord_{P_i}( \tau^l)\not= 
ord_{P_i}(\sigma ^j_0)+ord_{P_i}( \tau^l_0)=2n(i)-2.$$

Hence, $ord_{P_i}(\rho_i)\geq 2n(i)-1$ and therefore, 
$$ord_{P_{i+1}}(\rho_{i+1})\geq ord_{P_i}(\rho_i)+1\geq 2n(i)$$
as needed.
\end{proof}

\vskip3mm

A similar argument in the case of a rational curve gives  the following
\vskip2mm

\begin{cor}\label{racional} If $C_i$ is a rational component, we have that $$ord_{P_{i+1}}(\rho_{i+1})\ge ord_{P_i}(\rho_i).$$
\end{cor}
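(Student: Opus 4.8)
The plan is to reproduce the argument of Proposition \ref{+Ld} almost verbatim, dropping only the single step that produced the strict gain, since that step has no analogue on a rational component. First I would write the restriction of the vector bundle to $C_i$ as a direct sum of line bundles and express an element $\rho=\sum f_{jl}\sigma^j\otimes\tau^l$ of the kernel of the Petri map in terms of a basis $\sigma^j$ of $\pi_*\mathcal{E}_i$ with $t^{\alpha_j}\sigma^j$ a basis of $\pi_*\mathcal{E}_{i+1}$, and a basis $\tau^l$ of $\pi_*(\omega\otimes\mathcal{E}_i^*)$ with $t^{\beta_l}\tau^l$ a basis of $\pi_*(\omega\otimes\mathcal{E}_{i+1}^*)$. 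As before, the Petri map splits over the line-bundle summands, each nonzero component of $\rho$ carries at least two terms, and Proposition \ref{vansigma} supplies, for every $j$ and $l$,
$$ord_{P_i}(\sigma^j)\le\alpha_j\le ord_{P_{i+1}}(t^{\alpha_j}\sigma^j),\qquad ord_{P_i}(\tau^l)\le\beta_l\le ord_{P_{i+1}}(t^{\beta_l}\tau^l).$$

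Feeding these inequalities into the two formulas of Proposition \ref{ordrho}, which compute $ord_{P_i}(\rho_i)$ as a minimum of $ord_{P_i}(\sigma^j)+ord_{P_i}(\tau^l)$ over the surviving terms and control the normalizing exponent through $\max_{j,l}(\alpha_j+\beta_l-\nu(f_{jl}))$, one sees that the minimum defining $ord_{P_{i+1}}(\rho_{i+1})$ cannot fall below the one defining $ord_{P_i}(\rho_i)$; this yields the asserted $ord_{P_{i+1}}(\rho_{i+1})\ge ord_{P_i}(\rho_i)$. This is exactly the bookkeeping with the exponents $\alpha_j,\beta_l,\nu(f_{jl})$ already carried out in the elliptic case, and it is the part to verify carefully.

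The essential point — and the reason the conclusion is weaker than in Propositions \ref{E1} and \ref{+Ld} — is that the one-unit improvement cannot be extracted on $\mathbb{P}^1$. On an elliptic curve a degree-$d_1$ line bundle has at most one section, up to scalar, whose vanishing orders at the two generic nodes add up to $d_1$, so among the two independent sections $\sigma^j,\sigma^{'j}$ appearing in a kernel element at least one satisfies a strict inequality in \ref{ineq}, forcing a genuine increase. On a rational component this uniqueness collapses: for $\mathcal{O}(d_1)$ on $\mathbb{P}^1$ and each $0\le a\le d_1$ there is a section vanishing to orders $a$ and $d_1-a$ at the two nodes, so several — indeed all — independent sections can simultaneously realize equality in \ref{ineq}. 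Hence no term of $\rho$ is compelled to vanish to strictly higher order at $P_{i+1}$, and only the non-strict inequality can be guaranteed. Pinpointing exactly this failure of the strictness step, while checking that the remaining inequalities still chain together, is the main (and only real) obstacle.
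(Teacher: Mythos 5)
Your proposal is correct and follows exactly the route the paper intends: the paper offers no written proof beyond the remark that ``a similar argument'' to Proposition~\ref{+Ld} applies, and your argument is precisely that — the chain $ord_{P_i}(\sigma^j)\le\alpha_j\le ord_{P_{i+1}}(t^{\alpha_j}\sigma^j)$ (and its analogue for the $\tau^l$) combined with the two formulas of Proposition~\ref{ordrho} gives the non-strict inequality, and your diagnosis that strictness fails because on $\mathbb{P}^1$ every vanishing pattern $(a,d_1-a)$ is realized by a section of $\mathcal{O}(d_1)$ is exactly the right reason the elliptic-curve gain is unavailable.
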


\vskip 3mm

Now  the proof of the Theorem goes as follows: on every rational curve, the vanishing of a section in the kernel does not decrease (Cor. \ref{racional}) 
while on an elliptic curve, the vanishing increases in two units (Prop. \ref{+Ld}, Prop.\ref{E1}). Therefore,  on the last component of the central fiber 
 (that we can assume to be rational), we have
that $ord_{P_g}(\rho_g)\ge 2g$. As this multiplicity is at most the degree of $\omega$ which is $2g-2$, this is impossible. Hence $\rho$ cannot exist and 
the Petri map is injective.  


\end{document}